\def\XXint#1#2#3{{\setbox0=\hbox{$#1{#2#3}{\int}$}
     \vcenter{\hbox{$#2#3$}}\kern-.5\wd0}}
\theoremstyle{plain}
\newtheorem{lemma}{Lemma}[section]
 \newtheorem{prop}[lemma]{Proposition}
        \newtheorem{definition}{Definition}[section]
\newtheorem{lem}[lemma]{Lemma}
\newtheorem{theo}{Theorem}[section]
\newcommand{\R}{\mathbb{R}}
\newcommand{\M}{\mathbb{M}}
\newcommand{\K}{\mathbb{K}}
\newcommand{\Q}{\mathbb{Q}}
\newcommand{\divrg}{\textrm{div}\,}
\theoremstyle{definition}
\numberwithin{equation}{section}
\title{ {\bf Stable determination of the Winkler subgrade coefficient in a nanoplate }}
\author{G. Alessandrini\thanks{
Universit\`{a} degli Studi di Trieste, Italy,  
(alessang@units.it), ORCID: 0000-0002-1975-1299 }  \and A. Morassi\thanks{ Universit\`{a}
degli Studi di Udine, Italy (antonino.morassi@uniud.it), ORCID: 0000-0002-2186-7688}\and E.
Rosset\thanks{ Universit\`{a} degli Studi di Trieste, Italy
(rossedi@units.it), ORCID:0000-0003-2354-0459}  \and E. Sincich\thanks{ Universit\`{a} degli Studi di Trieste, Italy
(esincich@units.it), ORCID: 0000-0002-7405-1842}\and S. Vessella\thanks{ Universit\`{a}
degli Studi di Firenze, Italy (sergio.vessella@unifi.it), ORCID:  0000-0003-0139-8690}}
\date{}
\begin{document}

\maketitle

\begin{center}
\noindent \textbf{Abstract} 
\end{center}
\noindent{
We study the inverse problem of determining the Winkler coefficient in a nanoplate resting on an elastic foundation and clamped at the boundary. The nanoplate is described within a simplified strain gradient elasticity theory for isotropic materials, under the Kirchhoff-Love kinematic assumptions in infinitesimal deformation. We prove a global H\"{o}lder stability estimate of the subgrade coefficient by performing a single interior measurement of the transverse deflection of the nanoplate induced by a load concentrated at one point.}

\medskip

\medskip
 
\noindent \textbf{Mathematical Subject Classifications (2010)}: Primary: 35R30; Secondary: 35J25, 86A22.

\medskip

\medskip

\noindent \textbf{Key words}:  Elastic nanoplate, Winkler coefficient, H\"{o}lder stability

\section{Introduction}
\label{Introduction}

The remarkable development of nanotechnology over the last two decades has favoured the widespread use of nanoplate-like structures in the realisation of micro- and nano-electromechanical systems. The applications are numerous, ranging from actuators to resonators for measuring small masses, to shape memory devices, and involve problems in mechanics, electronics, and medicine \cite{L-D-L-X-Z-H-L-2012}, \cite{S-T-L-T-2012}, \cite{H-K-M-2019}, \cite{XQRZL-2021}.

In many applications, nanoplates are constrained at their boundary and, in addition, interact with the supporting structure in interior regions. This interaction can be realised, for example, by adhesive material interposed between the nanoplate and support. Correct modelling of the interaction is obviously important for the effective design and optimisation of these devices \cite{F-R-Z-X-X-2019}.

Among a variety of approaches to model nanoplate interaction with the supporting structure, Winkler-like elastic medium is one of the most popular \cite{S-N-S-M-2017}, \cite{S-A-R-2018}. In this model, the interaction is described by a bed of linear elastic springs of stiffness $\kappa$, $\kappa \geq 0$, that are assumed to act in the direction orthogonal to the nanoplate. The springs are independent of each other, that is the deflection of each spring is not influenced by the other adjacent springs.

The accuracy of the Winkler foundation model obviously depends on the values assigned to the subgrade coefficient $\kappa$. In fact, resonance frequencies of the free vibrations change appreciably as the coefficient $\kappa$ varies. In addition, a significant spatial variability of $\kappa$ can induce non-standard trends in the eigenfunctions of the nanoplate compared, for example, to the case of absence of interaction, and this can be reflected in important changes in the overall structural behavior \cite{B-M-M-F-2022}. The problem is further complicated by the fact that reference values for $\kappa$ are difficult to establish, also due to the variability of the adhesive interface and of the nanoplate and support materials. The situation is quite different from the soil-foundation interaction for real-scale buildings and constructions, for which approximate values of $\kappa$ in relation to the type of soil have long been available in the literature  \cite{Win}, \cite{CG}. For the reasons stated above, the inverse problem of determining the subgrade coefficient for a nanoplate is quite challenging and does not seem to have been systematically studied yet.

In this paper we consider the stability issue for the inverse problem of determining the subgrade coefficient of a nanoplate resting on a Winkler-like elastic foundation {}from the measurement of the transverse deflection induced at interior points of the nanoplate by a prescribed static load condition. The nanoplate is modelled within a simplified strain gradient elasticity theory developed by Lam et al. \cite{L-Y-C-W-T-2003}, under the Kirchhoff-Love kinematic assumptions in infinitesimal transverse deformation \cite{W-Z-Z-C-2011}, \cite{M-M-2013}. The nanoplate is described as a thin cylinder $\Omega \times (-t/2,t/2)$, where the bounded Lipschitz domain $\Omega$ in $\R^2$ is the middle surface and $t$ is the uniform thickness, $t << diam (\Omega)$. The nanoplate is assumed to be clamped at the boundary and the loads are represented by a single concentrated force acting transversally at an internal point $P_0$ of $\Omega$. Under the above assumptions, the transverse deflection $w: \overline{\Omega} \rightarrow \R$ of the nanoplate satisfies the sixth-order Dirichlet boundary value problem
\begin{center}
	{\small
		\( {\displaystyle 
			\begin{array}{lr}
				\divrg (\divrg ( (\mathbb P +\mathbb P ^h)\nabla^2 w)) 
				-\divrg (\divrg (\divrg (\mathbb Q \nabla^3 w)))
				+ \kappa w= f\delta_{P_0},& \mathrm{in}\ \Omega,
				\vspace{0.25em}\\
				w =0, & \mathrm{on}\ \partial \Omega,
				\vspace{0.25em}\\
				w,_n =0, & \mathrm{on}\ \partial \Omega,
				\vspace{0.25em}\\
				w,_{nn} =0, & \mathrm{on}\ \partial \Omega.
				\vspace{0.25em}\\
			\end{array}
		}
		\) \vskip -10.7em
		\begin{eqnarray}
			& & \label{eq-intro-3I-1}\\
			& & \label{eq-intro-3I-2}\\
			& & \label{eq-intro-3I-3}\\
			& & \label{eq-intro-3I-4}
		\end{eqnarray}
	} 
\end{center}
%

where $n$ is the unit outer normal to $\partial \Omega$. In equation \eqref{eq-intro-3I-1}, $\mathbb P$ is the fourth-order tensor describing the material response in classical Kirchhoff-Love theory, whereas $\mathbb P^h$ and $\mathbb Q$ are  fourth-order and sixth-order tensors respectively that incorporate small length scale effects. We refer to \cite{K-M-Z-2022} for more details on the derivation of the mechanical model. 

Given the concentrated force $f \delta_{P_0}$ and the Winkler coefficient $\kappa \in L^\infty (\Omega)$ and $\kappa\geq 0$ and  for $\mathbb P$, $\mathbb P^h$, $\mathbb Q$ satisfying suitable symmetry and strong convexity conditions, the direct problem \eqref{eq-intro-3I-1}--\eqref{eq-intro-3I-4} admits a unique solution $w \in H^3_0(\Omega)$; see Proposition \ref{prop-4-1} for a precise statement.

Our goal is the stability issue for the determination of the coefficient $\kappa$ {}from a single measurement of the deflection $w$ inside $\Omega$. In Theorem \ref{theo-7-1} we show that, for isotropic material and under suitable additional regularity of the elastic coefficients, if $w_i \in H^3_0(\Omega)$ is the solution to \eqref{eq-intro-3I-1}--\eqref{eq-intro-3I-4} for the Winkler coefficient $\kappa=\kappa_i \in L^\infty (\Omega) \cap H^s (\Omega)$, for some $0<s<1$, $i=1,2$, and if, for a given $\epsilon >0$,
\begin{equation}
	\label{eq:introd-w1-e-w2-vicini}
	\begin{aligned}{}
		& \| w_1 - w_2 \|_{L^2 (\Omega)} \leq \epsilon f,
	\end{aligned}
\end{equation}
then, for every $\sigma >0$, we have 
\begin{equation}
	\label{eq:stabilità}
	\begin{aligned}{}
		& \| \kappa_1 - \kappa_2 \|_{L^2 (\Omega_\sigma)} \leq C \epsilon^\beta,
	\end{aligned}
\end{equation}
where $\Omega_\sigma = \{ x \in \Omega | \ dist(x,\partial \Omega) > \sigma  \}$ and the constants $C>0$, $\beta \in (0,1)$ depend on the a priori data and on $\sigma$.

The method we adopted to prove \eqref{eq:stabilità} has similarities to that used in \cite{A-M-R-V-2015} to obtain a global H\"older estimate of the Winkler coefficient for the classical fourth-order Kirhhoff-Love's plate operator, and is inspired by a technique used by Alessandrini \cite{Al} for an inverse problem with interior measurements arising in hybrid imaging. The key ingredients of our proof of Theorem \ref{theo-7-1} are quantitative versions of the unique continuation principle for solutions to the equation $	\divrg (\divrg ( (\mathbb P +\mathbb P ^h)\nabla^2 w)) 
-\divrg (\divrg (\divrg (\mathbb Q \nabla^3 w)))
+ \kappa w=0$, which allow to keep under control the possible vanishing rate of $w$. These results are in the form of the Lipschitz propagation of smallness property (Proposition \ref{prop-8-1}) and the $A_p$ property (Proposition \ref{prop-9-1}), and are consequences of some basic unique continuation tools, such as the three spheres inequality and the doubling inequality which were recently derived in \cite{M-R-S-V-2024}  for the nanoplate operator for isotropic material. Another useful mathematical tool is a regularity result for a solution to the inhomogeneous equation $	\divrg (\divrg ( (\mathbb P +\mathbb P ^h)\nabla^2 w)) 
-\divrg (\divrg (\divrg (\mathbb Q \nabla^3 w)))
=g$ with the coefficients and the function $g$ belonging to suitable fractional Sobolev spaces (Proposition \ref{prop-5a-1}).

The plan of the paper is as follows. In Section \ref{Notation}  we collect some notation and definitions. Section \ref{Direct} contains the formulation of the direct problem and a point-wise lower bound of the solution to \eqref{eq-intro-3I-1}--\eqref{eq-intro-3I-4} in a neighbourhood of the point $P_0$ where the concentrated force is acting. Section \ref{Inverse} is devoted to the formulation and analysis of the inverse problem.

\section{Notation}
\label{Notation}

We shall denote by $B_r(P)$ the open disc in $\R^2$ of radius $r$ and
center $P$.

For any $U \subset \R^2$ and for any $r>0$, we denote
\begin{equation}
  \label{eq:2.int_env}
  U_{r}=\{x \in U \ |\  \textrm{dist}(x,\partial U)>r
  \}.
\end{equation}
\begin{definition}
  \label{def:2.1} (${C}^{k,\alpha}$ regularity)
Let $\Omega$ be a bounded domain in ${\R}^{2}$. Given $k,\alpha$,
with $k=0,1,2,...$, $0<\alpha\leq 1$, we say that a portion $S$ of
$\partial \Omega$ is of \textit{class ${C}^{k,\alpha}$ with
constants $\rho_{0}$, $M_{0}>0$}, if, for any $P \in S$, there
exists a rigid transformation of coordinates under which we have
$P=O$ and
\begin{equation*}
  \Omega \cap B_{\rho_{0}}(O)=\{x=(x_1,x_2) \in B_{\rho_{0}}(O)\quad | \quad
x_{2}>\psi(x_1)
  \},
\end{equation*}
where $\psi$ is a ${C}^{k,\alpha}$ function defined in
$I_{\rho_0}=(-\rho_{0},\rho_{0})$
satisfying
\begin{equation*}
\psi(0)=0,
\end{equation*}
\begin{equation*}
\psi' (0)=0, \quad \hbox {when } k \geq 1,
\end{equation*}
\begin{equation*}
\|\psi\|_{{C}^{k,\alpha}(I_{\rho_0})} \leq M_{0}\rho_{0}.
\end{equation*}

\medskip
\noindent When $k=0$, $\alpha=1$, we also say that $S$ is of
\textit{Lipschitz class with constants $\rho_{0}$, $M_{0}$}.
\end{definition}

  We use the convention to normalize all norms in such a way that their
  terms are dimensionally homogeneous with the argument of the norm and coincide with the
  standard definition when the dimensional parameter equals one.
  For instance, the norm appearing above is meant as follows
\begin{equation*}
  \|\psi\|_{{C}^{k,\alpha}(I_{\rho_0})} =
  \sum_{i=0}^k \rho_0^i
  \|\psi^{(i)}\|_{{L}^{\infty}(I_{\rho_0})}+
  \rho_0^{k+\alpha}|\psi^{(k)}|_{\alpha, I_{\rho_0}},
\end{equation*}
where
\begin{equation*}
|\psi^{(k)}|_{\alpha, I_{\rho_0}}= \sup_
{\overset{\scriptstyle x_1, \ y_1\in I_{\rho_0}}{\scriptstyle
x_1\neq y_1}} \frac{|\psi^{(k)}(x_1)-\psi^{(k)}(y_1)|}
{|x_1-y_1|^\alpha}
\end{equation*}
and $\psi^{(i)}$ denotes the $i$th-order
derivative of $\psi$.

Similarly, given a function $u:\Omega\mapsto \R$, where $\partial
\Omega$ satisfies Definition \ref{def:2.1}, and denoting by
$\nabla^i u$ the vector which components are the derivatives of
order $i$ of the function $u$, we denote
\begin{equation*}
\|u\|_{L^2(\Omega)}=\rho_0^{-1}\left(\int_\Omega u^2\right)
^{\frac{1}{2}},
\end{equation*}
\begin{equation*}
\|u\|_{H^k(\Omega)}= \rho_0^{-1} \left ( \sum_{i=0}^{k}
\rho_0^{2i} \int_\Omega |\nabla^i u|^2 \right )^{ \frac{1}{2} }, \quad k=0,1,2,...
\end{equation*}

Moreover, for $k=0,1,2,...$, and $s \in (0,1)$, we denote
\begin{equation*}
\|u\|_{H^{k+s}(\Omega)}= \|u\|_{H^k(\Omega)} + \rho_0^{k+s-1}[\nabla^k u]_{s},
\end{equation*}
where the semi-norm $[\ \!\cdot\ \!]_s$ is given by
\begin{equation}
   \label{eq:2.notation_0}
[\nabla^k u]_{s} = \left ( \int_\Omega \int_\Omega \frac{
|\nabla^k u(x)-\nabla^ku(y)|^2 }{ |x-y|^{2+2s} }\ dx \ dy \right )^{ \frac{1}{2}  }.
\end{equation}

We denote with $H^k_0(\Omega) $ the closure of $C^{\infty}_0(\Omega)$ in  $H^k(\Omega)$ norm and with  $H^{-k}(\Omega)$ the dual space of $H^{k}_0(\Omega)$.

We denote by $\M^{2}, \M^{3}$ the Banach spaces of second order and third order tensors and by 
$\widehat{\M}^{2}, \widehat{\M}^{3}$ the corresponding subspaces of tensors having components invariant with respect to permutations of all the indexes.

Let ${\cal L} (X, Y)$ be the space of bounded linear
operators between Banach spaces $X$ and $Y$. Given $\K\in{\cal L} ({\M}^{2},{\M}^{2})$ and $A,B\in \M^{2}$, we use the following notation 
\begin{equation}
	\label{eq:2.notation_1}
	({\K}A)_{ij} = \sum_{l,m=1}^{2} K_{ijlm}A_{lm},
\end{equation}
\begin{equation}
	\label{eq:2.notation_2}
	A \cdot B = \sum_{i,j=1}^{2} A_{ij}B_{ij},
\end{equation}

Similarly, given $\K\in{\cal L} ({\M}^{3},
{\M}^{3})$ and $A,B\in \M^{3}$, we denote
\begin{equation}
	\label{eq:2.notation_1bis}
	({\K}A)_{ijk} = \sum_{l,m,n=1}^{2} K_{ijklmn}A_{lmn},
\end{equation}
\begin{equation}
	\label{eq:2.notation_2bis}
	A \cdot B = \sum_{i,j,k=1}^{2} A_{ijk}B_{ijk},
\end{equation}

Moroever, for any $A\in \M^{n}$, with $ n=2,3$, we shall denote 
\begin{equation}
	\label{eq:2notation_3}
	|A|= (A \cdot A)^{\frac {1} {2}}.
\end{equation}

\section{The direct problem}
\label{Direct}

Let us consider a nanoplate $\Omega \times  \left ( -\frac{t}{2}, \frac{t}{2}   \right )$ with middle surface $\Omega$ represented by a bounded domain of $\R^2$ and having constant thickness $t$, $t << diam (\Omega)$. We assume that the boundary $\partial \Omega$ of $\Omega$ is of class $C^{2,1}$ with constants $\rho_0$, $M_0$ and that 
\begin{equation}
	\label{eq:1-1}
	|\Omega| \leq M_1 \rho_0^2,
\end{equation}
where $M_1$ is a positive constant.

The nanoplate is resting on a Winkler soil with subgrade reaction
coefficient
\begin{equation}
    \label{eq-3-0}
    \kappa \in L^\infty(\Omega),\quad  0\leq \kappa(x) \leq \frac{\overline{\kappa}}{\rho_0} \ \
    \hbox{a.e. in } \Omega,
\end{equation}
with $\overline{\kappa}$ a given positive constant.

The boundary $\partial \Omega$ is clamped and we assume that a
concentrated force of intensity $f$ is acting at a point $P_0 \in \Omega$ along a
direction orthogonal to the middle surface $\Omega$,
\begin{equation}
    \label{eq-f=}
    f=\rho_0^2\overline{f},
\end{equation}
with $\overline{f}$ a positive constant and
\begin{equation}
    \label{eq-6-1}
    dist(P_0, \partial \Omega) \geq d\rho_0,
\end{equation}
for some positive constant $d$.

 According to the Kirchhoff-Love's  theory of nanoplates subject to infinitesimal
deformation, the statical equilibrium of the plate is described by
the following Dirichlet boundary value problem  \cite{K-M-Z-2022}

\begin{center}
	{\small
		\( {\displaystyle 
			\begin{array}{lr}
				\divrg (\divrg ( (\mathbb P +\mathbb P ^h)\nabla^2 w)) 
				-\divrg (\divrg (\divrg (\mathbb Q \nabla^3 w)))
				+ \kappa w= f\delta_{P_0},& \mathrm{in}\ \Omega,
				\vspace{0.25em}\\
				w =0, & \mathrm{on}\ \partial \Omega,
				\vspace{0.25em}\\
				w,_n =0, & \mathrm{on}\ \partial \Omega,
				\vspace{0.25em}\\
				w,_{nn} =0, & \mathrm{on}\ \partial \Omega.
				\vspace{0.25em}\\
			\end{array}
		}
		\) \vskip -10.7em
		\begin{eqnarray}
			& & \label{eq--3I-1}\\
			& & \label{eq---3I-2}\\
			& & \label{eq--3I-3}\\
			& & \label{eq--3I-4}
		\end{eqnarray}
	} 
\end{center}

where for the sake of simplicity we adopted the notation 
\begin{eqnarray}
\divrg (\divrg ( (\mathbb P +\mathbb P ^h)\nabla^2 w)) 
				-\divrg (\divrg (\divrg (\mathbb Q \nabla^3 w)))
				\end{eqnarray}	
to denote the following sixth order elliptic operator	
\begin{eqnarray}		
				\displaystyle{\frac{\partial^2 }{\partial x_i \partial x_j}\left (
	(P_{ijlm}+P_{ijlm}^h   ) \frac{\partial^2 w}{\partial x_l \partial x_m}
	-
	\frac{\partial }{\partial x_k} \left ( Q_{ijklmn}\frac{\partial^3 w}{\partial x_l \partial x_m \partial x_n} \right )
	\right )	}			
\end{eqnarray}				
where the summation over repeated indexes $i,j,k,l,m,n=1,2$ is assumed.

\
\noindent
On the elasticity tensors  $\mathbb{P}$, $\mathbb{P}^h$, $\mathbb{Q}$ we make the following assumptions: 

\noindent
 
\textit{i) Boundedness}
\begin{equation}
    \label{eq-boundedness}
    \|\mathbb P\|_{L^{\infty}( \Omega, \mathcal{L}(\widehat{\M}^2, \widehat{\M}^2   )  )}, \ \|\mathbb P^h \|_{L^{\infty}( \Omega, \mathcal{L}(\widehat{\M}^2, \widehat{\M}^2   )  )} \leq A_1\rho_0^3,
		\quad \|\mathbb Q \|_{L^{\infty}( \Omega, \mathcal{L}(\widehat{\M}^3, \widehat{\M}^3   )  )
		} \leq A_1\rho_0^5,
\end{equation}
where $A_1$ is a positive constant.

\textit{ii) Isotropy}

\begin{equation}
	\label{eq:6-1}
	P_{\alpha \beta \gamma \delta}= B((1-\nu)\delta_{\alpha \gamma} \delta_{\beta \delta} + \nu \delta_{\alpha \beta} \delta_{\gamma\delta}
	),
\end{equation}
\begin{equation}
	\label{eq:6-2}
	P_{\alpha \beta \gamma \delta}^h= (2a_2+5a_1)\delta_{\alpha \gamma} \delta_{\beta \delta} + (-a_1 -a_2 +a_0) \delta_{\alpha \beta} \delta_{\gamma\delta},
\end{equation}
\begin{multline}
	\label{eq:7-0}
	Q_{ijklmn}= \frac{1}{3}(b_0 -3b_1)\delta_{ij}\delta_{kn}\delta_{lm}+
	\\
	+ \frac{1}{6}(b_0 -3b_1)
	(
	\delta_{ik} ( \delta_{jl} \delta_{mn} + \delta_{jm}\delta_{ln}   )
	+
	\delta_{jk} ( \delta_{il} \delta_{mn} + \delta_{im}\delta_{ln}   )
	)
	+
	Q_8 
	(
	\delta_{kn} ( \delta_{il}\delta_{jm} +\delta_{im}\delta_{jl}    )
	)
	+
	\\
	+
	Q_9
	(
	\delta_{jn} ( \delta_{il} \delta_{km} + \delta_{im}\delta_{kl}   )
	+
	\delta_{in} ( \delta_{jl} \delta_{km} + \delta_{jm}\delta_{kl}   )
	),	
\end{multline}
where $2(Q_8+2Q_9)=5b_1$.

The bending stiffness (per unit length) $B=B(x)$ is given by the function
\begin{equation}
	\label{eq:7-1}
	B(x) = \frac{t^3 E(x)}{12(1-\nu^2(x))}, \quad \hbox{a.e. in } \Omega,
\end{equation}
where the Young's modulus $E$ and the Poisson's coefficient $\nu$ of the material can be written in terms of the Lamé moduli $\mu$ and $\lambda$ as follows
\begin{equation}
	\label{eq:7-2}
	E(x) = \frac{\mu(x)(2\mu(x)+3\lambda(x))}{ \mu(x) +\lambda(x)  },
	\quad 
	\nu(x) = \frac{\lambda(x)}{2(\mu(x)+\lambda(x))}.
\end{equation}
The coefficients $a_i(x)$, $i=0,1,2$, are given by 
\begin{equation}
	\label{eq:7-3}
	a_0(x)=2\mu(x)t\mathit{l}_0^2, \quad a_1(x) = \frac{2}{15}\mu(x)t  \mathit{l}_1^2, \quad a_2(x) = \mu(x)t\mathit{l}_2^2 \quad \hbox{a.e. in } \ \Omega,
\end{equation}
where the material length scale parameters $\mathit{l}_i$ are assumed to be positive constants. We denote

\begin{equation}
	\label{eq:l}
	l=\min\{l_0,l_1, l_2\}.
\end{equation}

The coefficients $b_i(x)$, $i=0,1$, are given by 
\begin{equation}
	\label{eq:7-4}
	b_0(x)=2\mu(x)\frac{t^3}{12}\mathit{l}_0^2, \quad 
	b_1(x)=\frac{2}{5}\mu(x)\frac{t^3}{12}\mathit{l}_1^2
	\quad \hbox{a.e. in } \ \Omega.
\end{equation}

\noindent
\textit{iii) Strong convexity for $\mathbb{P}+\mathbb{P}^h$, $\mathbb{Q}$.}

We assume the following ellipticity conditions on $\mu$ and $\lambda$:
\begin{equation}
	\label{eq:8-1}
	\mu(x) \geq \alpha_0 >0, \quad 2\mu(x)+3\lambda(x) \geq \gamma_0 >0 \quad \hbox{a.e. in } \ \Omega,
\end{equation}
where $\alpha_0$, $\gamma_0$ are positive constants. By \eqref{eq:7-3}, \eqref{eq:7-4} and \eqref{eq:8-1} we also have
\begin{equation}
	\label{eq:8-2}
	a_i(x) \geq t \mathit{l}^2 \alpha_0^h >0, \ i=0,1,2, \quad
	b_j(x) \geq t^3 \mathit{l}^2 \beta_0^h >0, \ j=0,1, \quad \hbox{a.e. in } \ \Omega,
\end{equation}
where $\alpha_0^h = \frac{2}{15}\alpha_0$ and $\beta_0^h = \frac{1}{30} \alpha_0$.

By \eqref{eq:8-1}, \eqref{eq:8-2} we obtain the following strong convexity conditions on $\mathbb{P}+\mathbb{P}^h$ and $\mathbb{Q}$. For every $A \in \widehat{\M}^2$ we have 
\begin{equation}
	\label{eq:8-3}
	(\mathbb{P}+\mathbb{P}^h) A \cdot A \geq t(t^2+l^2) 
	\xi_{\mathbb{P}} |A|^2 \quad \hbox{a.e. in } \ \Omega;
\end{equation}
for every $B \in \widehat{\M}^3$ we have 
\begin{equation}
	\label{eq:8-4}
	\mathbb{Q} B \cdot B \geq t^3l^2 
	\xi_{\mathbb{Q}} |B|^2 \quad \hbox{a.e. in } \ \Omega;
\end{equation}
where $\xi_{\mathbb{P}}$, $\xi_{\mathbb{Q}}$ are positive constants only depending on $\alpha_0$ and $\gamma_0$.

\begin{prop}
\label{prop-4-1}
Under the above assumptions, there exists a unique weak solution
$w \in H^3_0(\Omega)$ to \eqref{eq--3I-1}--\eqref{eq--3I-4} and it satisfies
%
\begin{equation}
    \label{eq-4-1}
    \|w\|_{H^3(\Omega)} \leq C\overline{f}\rho_0,
\end{equation}
where the constant $C>0$ only depends on $\frac{t}{\rho_0}$, $\frac{l}{\rho_0}$, $\alpha_0$, $\gamma_0$, $M_0$.
\end{prop}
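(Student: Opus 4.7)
The plan is to cast \eqref{eq--3I-1}--\eqref{eq--3I-4} as a variational problem on $H^3_0(\Omega)$ and apply the Lax-Milgram theorem. The Dirac datum is accommodated by the two-dimensional Sobolev embedding $H^3_0(\Omega)\hookrightarrow C^0(\overline{\Omega})$, and the quantitative bound \eqref{eq-4-1} is obtained by testing with $w$ itself.

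Concretely, I would introduce the symmetric bilinear form
\begin{equation*}
a(w,v)=\int_\Omega (\mathbb{P}+\mathbb{P}^h)\nabla^2 w\cdot\nabla^2 v\,dx+\int_\Omega \mathbb{Q}\nabla^3 w\cdot\nabla^3 v\,dx+\int_\Omega \kappa\,w\,v\,dx
\end{equation*}
on $H^3_0(\Omega)$, and the linear functional $L(v)=fv(P_0)$. Boundedness of $a$ follows immediately from \eqref{eq-boundedness} and \eqref{eq-3-0}. Since $\Omega\subset\R^2$ and $3-0>2/2$, the embedding $H^3_0(\Omega)\hookrightarrow C^0(\overline{\Omega})$ holds and its norm can be controlled in terms of $\rho_0$, $M_0$, $M_1$; this makes $L$ continuous on $H^3_0(\Omega)$ and, in particular, makes the notion of weak solution meaningful. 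A solution $w\in H^3_0(\Omega)$ of \eqref{eq--3I-1}--\eqref{eq--3I-4} is then defined by $a(w,v)=L(v)$ for all $v\in H^3_0(\Omega)$.

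Next I would establish coercivity. By the strong convexity assumption \eqref{eq:8-4} and the pointwise nonnegativity of $(\mathbb{P}+\mathbb{P}^h)A\cdot A$ from \eqref{eq:8-3} and of $\kappa$ from \eqref{eq-3-0},
\begin{equation*}
a(w,w)\geq t^3 l^2\xi_{\mathbb{Q}}\,\|\nabla^3 w\|_{L^2(\Omega)}^2.
\end{equation*}
Since $w\in H^3_0(\Omega)$, iterated Poincar\'e inequalities give an estimate $\|w\|_{H^3(\Omega)}\leq C(M_0,M_1)\,\|\nabla^3 w\|_{L^2(\Omega)}$, with a constant whose $\rho_0$-dependence is absorbed in the normalization of the Sobolev norms introduced in Section \ref{Notation}. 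Hence $a$ is coercive on $H^3_0(\Omega)$ with a coercivity constant depending only on $t/\rho_0$, $l/\rho_0$, $\alpha_0$, $\gamma_0$, $M_0$, $M_1$. Lax-Milgram then yields existence and uniqueness of $w\in H^3_0(\Omega)$.

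Finally, to prove \eqref{eq-4-1} I would test with $v=w$: combining coercivity, the Sobolev embedding, and \eqref{eq-f=},
\begin{equation*}
\|w\|_{H^3(\Omega)}^2\lesssim a(w,w)=fw(P_0)\leq f\,\|w\|_{C^0(\overline{\Omega})}\leq Cf\,\|w\|_{H^3(\Omega)}=C\rho_0^2\overline{f}\,\|w\|_{H^3(\Omega)},
\end{equation*}
which after dividing yields $\|w\|_{H^3(\Omega)}\leq C\overline{f}\rho_0$ with $C$ depending only on the stated parameters. The main technical point, rather than any deep analytical obstacle, is the careful bookkeeping of the dimensional constants so that the Sobolev embedding constant, the Poincar\'e constants on $H^3_0(\Omega)$, and the constants coming from \eqref{eq:8-3}--\eqref{eq:8-4} combine into a bound with the precise dependence on $t/\rho_0$, $l/\rho_0$, $\alpha_0$, $\gamma_0$, $M_0$ claimed in the statement; this is the step that requires the most care, but it is a routine, not a conceptual, difficulty.
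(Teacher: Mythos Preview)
Your proposal is correct and follows essentially the same route as the paper: variational formulation on $H^3_0(\Omega)$, boundedness from \eqref{eq-boundedness} and \eqref{eq-3-0}, coercivity from \eqref{eq:8-4} plus Poincar\'e, well-posedness of the Dirac functional via the Sobolev embedding, and the energy estimate by testing with $w$. The only cosmetic differences are that the paper invokes the Riesz representation theorem (the form being symmetric) rather than Lax--Milgram, and uses the sharper embedding $H^3_0(\Omega)\subset C^{0,1}(\overline{\Omega})$ instead of $C^0$; neither changes the argument.
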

\begin{proof}
A weak solution of the problem \eqref{eq--3I-1}--\eqref{eq--3I-4}
is a function $w \in H_0^3(\Omega)$ satisfying
\begin{equation}
    \label{eq-4-2}
    \int_\Omega (\mathbb P+\mathbb P^h) \nabla^2 w \cdot \nabla^2 v + \int_\Omega \mathbb Q \nabla^3 w \cdot \nabla^3 v
    +\int_\Omega \kappa wv = f v(P_0), \quad \hbox{for
    every } v \in H_0^3 (\Omega).
\end{equation}
Let us notice that
\begin{equation}
    \label{eq-4-3}
    H_0^3(\Omega) \subset C^{0,1}(\overline{\Omega}),
\end{equation}
and, therefore, the linear functional
\begin{equation*}
F:H^3_0(\Omega)\rightarrow\R
\end{equation*}
\begin{equation*}
F(v)= f v(P_0)
\end{equation*}
is well-defined and bounded.
The symmetric bilinear form

\begin{equation*}
B:H^3_0(\Omega)\times H^3_0(\Omega)\rightarrow\R
\end{equation*}
\begin{equation}
	\label{eq-B(w,v)}
B(w,v)= \int_\Omega (\mathbb P +\mathbb P^h)\nabla^2 w \cdot \nabla^2 v +
\int_\Omega \mathbb Q\nabla^3 w \cdot \nabla^3 v
+\int_\Omega \kappa wv  
\end{equation}
is bounded and coercive. Precisely,
\begin{equation}
	\label{eq-B_bounded}
B(w,v)\leq C\rho_0 \|w\|_{H^3(\Omega)}\|v\|_{H^3(\Omega)},
\end{equation}
with $C$ only depending on $\overline{\kappa}$ and $A_1$;
\begin{equation}
	\label{eq-B_coercive}
B(w,w)\geq \int_\Omega \mathbb Q \nabla^3 w\cdot \nabla^3 w \geq C\rho_0 \|w\|_{H^3(\Omega)}^2,
\end{equation}
where we have used  the strong convexity conditions \textit{iii)} and the Poincar\'{e} inequality, and $C$ only depends on $\frac{t}{\rho_0}$, $\frac{l}{\rho_0}$, $\alpha_0$, $\gamma_0$, $M_0$.

The variational formulation \eqref{eq-4-2} becomes 
\begin{equation*}
B(w,v)=F(v), \quad \hbox{for every } v\in H^3_0(\Omega),
\end{equation*}
and by Riesz representation theorem a weak solution $w\in H^3_0(\Omega)$ exists and is unique.

By
\eqref{eq-B_coercive}, \eqref{eq-4-3} and recalling \eqref{eq-f=}, the estimate \eqref{eq-4-1}
follows.
\end{proof}

\begin{lem}
\label{lem-8a-1}
Under the above hypotheses, let $w \in
H_0^3(\Omega)$ be the solution to \eqref{eq--3I-1}--\eqref{eq--3I-4}.
There exists $\overline{\sigma} >0$, only depending on $\frac{t}{\rho_0}$, $\frac{l}{\rho_0}$, $\alpha_0$, $\gamma_0$, 
$M_0$, $M_1$, $A_1$, $\overline{\kappa}$, $d$, such that
\begin{equation}
    \label{eq-8a-0}
    w(x)\geq Cd^4{\bar{f}}{\rho_0},\quad \forall x\in B_{2\overline{\sigma}\rho_0}(P_0),
\end{equation}
where $C>0$ only depends on $\frac{t}{\rho_0}$, $\frac{l}{\rho_0}$, $\alpha_0$, $\gamma_0$, $M_0$, $M_1$, $A_1$, $\overline{\kappa}$;

\begin{equation}
    \label{eq-8a-1}
    \int_{B_{2\sigma \rho_0}(P_0)\setminus B_{\sigma
    \rho_0}(P_0)} w^2 \geq C \sigma^2 d^4 \rho_0^2
    \|w\|_{H^3(\Omega)}^2,\quad \hbox{for every }\sigma,\ 0 <
\sigma \leq \overline{\sigma}
\end{equation}
where $C>0$ only depends on $\frac{t}{\rho_0}$, $\frac{l}{\rho_0}$, $\alpha_0$, $\gamma_0$, $M_0$, $M_1$, $\overline{\kappa}$, $A_1$.
\end{lem}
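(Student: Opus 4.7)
The plan is to exploit the concentrated forcing together with the strong coercivity of the sixth-order bilinear form $B$ via a test-function argument, the energy identity, and a quantitative Sobolev embedding. First, fix a radial cutoff $\eta \in C^\infty_c(\R^2)$ with $\eta(0)=1$ and $\mathrm{supp}(\eta)\subset B_1(0)$, and define $v(x)=\eta(2(x-P_0)/(d\rho_0))$. By \eqref{eq-6-1}, $\mathrm{supp}(v)\subset B_{d\rho_0/2}(P_0)\subset\Omega$, so $v\in H^3_0(\Omega)$; moreover, an explicit computation of each term in the $H^3$-norm, dominated by the $|\nabla^3 v|\lesssim(d\rho_0)^{-3}$ contribution, yields $\|v\|_{H^3(\Omega)}\leq C_\eta d^{-2}$. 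Testing \eqref{eq-4-2} with this $v$, using $v(P_0)=1$, the bound \eqref{eq-B_bounded}, and $f=\rho_0^2\overline{f}$ from \eqref{eq-f=}, I obtain
\[
\rho_0^2\overline{f}=B(w,v)\leq C\rho_0\|w\|_{H^3}\|v\|_{H^3}\leq CC_\eta\rho_0 d^{-2}\|w\|_{H^3(\Omega)},
\]
whence the lower bound $\|w\|_{H^3(\Omega)}\geq c_1 d^2\overline{f}\rho_0$.

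Next, the energy identity $B(w,w)=fw(P_0)$, obtained by choosing $v=w$ in \eqref{eq-4-2}, combined with the coercivity \eqref{eq-B_coercive}, yields
\[
w(P_0)=\frac{B(w,w)}{f}\geq\frac{C_{\mathrm{co}}\rho_0\|w\|_{H^3}^2}{\rho_0^2\overline{f}}\geq c_2 d^4\overline{f}\rho_0.
\]
To propagate this single-point lower bound to a neighborhood, I invoke the Sobolev embedding $H^3(\Omega)\hookrightarrow C^{1,\alpha}(\overline\Omega)$ valid in two dimensions, which gives $|w(x)-w(P_0)|\leq C_{\mathrm{Sob}}\rho_0^{-1}\|w\|_{H^3}|x-P_0|$; combined with \eqref{eq-4-1}, this produces $|w(x)-w(P_0)|\leq C_{\mathrm{Sob}} C_p\overline{f}|x-P_0|$. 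Setting $\overline{\sigma}=\min(d/4,\,c_2 d^4/(4C_{\mathrm{Sob}} C_p))$ guarantees both $B_{2\overline\sigma\rho_0}(P_0)\subset\Omega$ and $|w(x)-w(P_0)|\leq c_2 d^4\overline{f}\rho_0/2$ for $x\in B_{2\overline\sigma\rho_0}(P_0)$, so that $w(x)\geq(c_2/2)d^4\overline{f}\rho_0$ on this ball, which is \eqref{eq-8a-0}.

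Finally, for any $\sigma\in(0,\overline\sigma]$, the annulus $B_{2\sigma\rho_0}(P_0)\setminus B_{\sigma\rho_0}(P_0)$ has area $3\pi\sigma^2\rho_0^2$ and is contained in $B_{2\overline\sigma\rho_0}(P_0)$, so \eqref{eq-8a-0} yields
\[
\int_{B_{2\sigma\rho_0}(P_0)\setminus B_{\sigma\rho_0}(P_0)}w^2\geq\frac{3\pi c_2^2}{4}\, d^8\sigma^2\overline{f}^2\rho_0^4,
\]
and replacing $\overline{f}^2\rho_0^2$ by $\|w\|_{H^3}^2/C_p^2$ via \eqref{eq-4-1} establishes \eqref{eq-8a-1}. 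The main obstacle is the coordinated use of the energy identity in both directions: coercivity and boundedness of $B$ together with the test-function bound provide matching two-sided estimates relating $\|w\|_{H^3}$, $w(P_0)$, and $\overline{f}\rho_0$, from which the $d^4$ scaling in \eqref{eq-8a-0} emerges as the intrinsic cost of localizing a $H^3_0$-test function to a ball of radius $\sim d\rho_0$ around $P_0$ for the sixth-order operator in dimension two. All constants depend only on $t/\rho_0$, $l/\rho_0$, $\alpha_0$, $\gamma_0$, $M_0$, $M_1$, $\overline{\kappa}$, $A_1$, and on the fixed cutoff $\eta$.
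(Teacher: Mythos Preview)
Your approach is essentially the same as the paper's: a localized test function (equivalently, the paper's lower bound on $\|\delta_{P_0}\|_{H^{-3}(\Omega)}$ via scaling) gives $\|w\|_{H^3}\geq c_1 d^{2}\overline f\rho_0$; the energy identity plus coercivity then yields $w(P_0)\geq c_2 d^{4}\overline f\rho_0$; and the Lipschitz embedding propagates this to a ball. Your proof of \eqref{eq-8a-0} is correct.

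There is, however, a small inefficiency in your derivation of \eqref{eq-8a-1}: by passing from the pointwise bound $w(x)\geq C d^{4}\overline f\rho_0$ back to $\|w\|_{H^3}$ via the \emph{upper} bound \eqref{eq-4-1}, you end up with $\int w^{2}\geq C\sigma^{2}d^{8}\rho_0^{2}\|w\|_{H^3}^{2}$, i.e.\ a power $d^{8}$ rather than the $d^{4}$ stated. The remedy is immediate from your own ingredients: combine the coercivity estimate $w(P_0)\geq C\rho_0\|w\|_{H^3}^{2}/f$ with your lower bound $\|w\|_{H^3}\geq c_1 d^{2}\overline f\rho_0$ applied to \emph{one} of the two factors only, obtaining
\[
w(P_0)\;\geq\; C\,\frac{\rho_0}{f}\,\|w\|_{H^3}\cdot c_1 d^{2}\overline f\rho_0
\;=\; C c_1\, d^{2}\,\|w\|_{H^3(\Omega)},
\]
which is precisely the paper's intermediate step \eqref{eq-8b-5}. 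Since $w(x)\geq w(P_0)/2$ on $B_{2\overline\sigma\rho_0}(P_0)$, integrating over the annulus now gives $\int w^{2}\geq C\sigma^{2}d^{4}\rho_0^{2}\|w\|_{H^3}^{2}$ with the correct exponent.
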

\begin{proof}
By \eqref {eq-4-2} and \eqref{eq-B_bounded}, we have that for every $v
\in H_0^3(\Omega)$
\begin{equation}
    \label{eq-8a-2}
    |v(P_0)| \leq C \frac{\rho_0}{f}\|v\|_{H^3(\Omega)}\|w\|_{H^3(\Omega)},
\end{equation}
with $C$ only depending on $\overline{\kappa}$ and $A_1$, so that
\begin{equation}
    \label{eq-8b-1}
    \|\delta_{P_0}\|_{H^{-3}(\Omega)} = \rho_0^{-2}\sup_
    {\overset{\scriptstyle v\in H_0^3(\Omega)}{\scriptstyle
     v\neq 0}}
     \frac{|v(P_0)|}{ \|v\|_{H^3(\Omega)}} \leq \frac{C}{\rho_0 f}
    \|w\|_{H^3(\Omega)},
\end{equation}
where $C>0$ only depends on $\overline{\kappa}$ and $A_1$.

Next, let us estimate $\|\delta_{P_0}\|_{H^{-3}(\Omega)}$ {}from below.
By \eqref{eq:1-1} and \eqref{eq-6-1}, it follows that
\begin{equation}
    \label{eq-d less}
    d\leq \sqrt{\frac{M_1}{\pi}}.
\end{equation}
Let us set
\begin{equation}
    \label{eq-D=}
    D=\max\left\{\sqrt{\frac{M_1}{\pi}}, 1\right\},
\end{equation}
\begin{equation}
    \label{eq-bar d=}
    \bar d=\frac{d}{D}\leq 1.
\end{equation}
Since $\Omega\supset B_{d\rho_0}(P_0)$, $H^3_0(B_{d\rho_0}(P_0))\subset H^3_0(\Omega)$ and we can estimate
\begin{multline}
    \label{eq-stima dal basso delta}
    \|\delta_{P_0}\|_{H^{-3}(\Omega)}=\rho_0^{-2}\sup_
    {\overset{\scriptstyle v\in H_0^3(\Omega)}{\scriptstyle
     v\neq 0}}
     \frac{|v(P_0)|}{ \left(\int_\Omega \rho_0^{-2}v^2+|\nabla v|^2+\rho_0^2|\nabla^2 v|^2
		+\rho_0^4|\nabla^3 v|^2\right)^{1/2}}\geq\\
		\geq
		\rho_0^{-2}\sup_
    {\overset{\scriptstyle v\in H_0^3(B_{d\rho_0}(P_0))}{\scriptstyle
     v\neq 0}}
     \frac{|v(P_0)|}{ \left(\int_{B_{d\rho_0}(P_0)} \rho_0^{-2}v^2+|\nabla v|^2+\rho_0^2|\nabla^2 v|^2
		+\rho_0^4|\nabla^3 v|^2\right)^{1/2}}.
\end{multline}
Let us estimate {}from above the square of last denominator. Since $\bar d\leq 1$, we have
\begin{multline}
    \label{eq-stima dall'alto denominatore}
   \int_{B_{d\rho_0}(P_0)} \rho_0^{-2}v^2+|\nabla v|^2+\rho_0^2|\nabla^2 v|^2
		+\rho_0^4|\nabla^3 v|^2\leq\\
		\leq \int_{B_{d\rho_0}(P_0)} \bar d^{-6}\rho_0^{-2}v^2+\bar d^{-4}|\nabla v|^2+\bar d^{-2}\rho_0^2|\nabla^2 v|^2
		+\rho_0^4|\nabla^3 v|^2=\\
		=\bar d^{-4}\left(\int_{B_{d\rho_0}(P_0)} \bar d^{-2}\rho_0^{-2}v^2+|\nabla v|^2+\bar d^{2}\rho_0^2|\nabla^2 v|^2
		+\bar d^{4}\rho_0^4|\nabla^3 v|^2\right)=\\
		=\frac{D^4}{d^4}\left(\int_{B_{d\rho_0}(P_0)} \frac{D^2}{(d\rho_0)^2}v^2+|\nabla v|^2+
		\frac{(d\rho_0)^2}{D^2}|\nabla^2 v|^2
		+\frac{(d\rho_0)^4}{D^4}|\nabla^3 v|^2\right)\leq\\
	  \leq 	
		\frac{D^6}{d^4}\left(\int_{B_{d\rho_0}(P_0)} \frac{1}{(d\rho_0)^2}v^2+|\nabla v|^2+
		(d\rho_0)^2|\nabla^2 v|^2
		+(d\rho_0)^4|\nabla^3 v|^2\right).
\end{multline}
where in the last step we have used $D\geq 1$.

Given $v\in H_0^3(B_{d\rho_0}(P_0))$, let us consider the function $\widetilde{v}\in H_0^3(B_1)$ defined by $\widetilde{v}(x)=v(P_0+d\rho_0 x)$.
It is straightforward that 
\begin{equation}
   \int_{B_1(0)}\widetilde{v}^2+|\nabla \widetilde{v}|^2+|\nabla^2 \widetilde{v}|^2+
	|\nabla^3 \widetilde{v}|^2=\int_{B_{d\rho_0}(P_0)}
	\dfrac{v^2}{(d\rho_0)^2}
	+|\nabla v|^2+
	(d\rho_0)^{2}|\nabla^2 v|^2+
	(d\rho_0)^{4}|\nabla^3 v|^2.
\end{equation}

Therefore, introducing the absolute constant
\begin{equation}
    \label{eq-H}
    H=\sup_
    {\overset{\scriptstyle \widetilde{v}\in H_0^3(B_1)}{\scriptstyle
     \widetilde{v}\neq 0}}
     \frac{|\widetilde{v}(0)|}{ \left(\int_{B_1} \widetilde{v}^2+|\nabla \widetilde{v}|^2+|\nabla^2 \widetilde{v}|^2
		+|\nabla^3 \widetilde{v}|^2\right)^{1/2}},
\end{equation}
by \eqref{eq-stima dal basso delta}, \eqref{eq-stima dall'alto denominatore} and \eqref{eq-H} we have
\begin{equation}
    \label{eq-stima dal basso deltaBIS}
    \|\delta_{P_0}\|_{H^{-3}(\Omega)}\geq \frac{H}{\rho_0^2}\frac{d^2}{D^3}.
\end{equation}
By comparing \eqref{eq-8b-1} and \eqref{eq-stima dal basso deltaBIS}
\begin{equation}
    \label{eq-8b-3}
    \|w\|_{H^3(\Omega)} \geq \frac{C}{\rho_0} d^2f,
\end{equation}
where $C>0$ only depends on $\overline{\kappa}$, $M_1$, $A_1$.

By \eqref{eq-4-2} and \eqref{eq-B_coercive},
\begin{equation}
    \label{eq-w dal basso in P_0}
    w(P_0)\geq C\frac{\rho_0}{f}\|w\|_{H^3(\Omega)}^2,
\end{equation}
with $C$ only depending on $\frac{t}{\rho_0}$, $\frac{l}{\rho_0}$, $\alpha_0$, $\gamma_0$, $M_0$.
By estimating one of the factors $\|w\|_{H^3(\Omega)}$ in the above inequality with \eqref{eq-8b-3}, we obtain
\begin{equation}
    \label{eq-8b-5}
    w(P_0) \geq C d^2 \|w\|_{H^3(\Omega)},
\end{equation}
where $C$ only depends on $\frac{t}{\rho_0}$, $\frac{l}{\rho_0}$, $\alpha_0$, $\gamma_0$, $M_0$, $M_1$, $A_1$, $\overline{\kappa}$.
By the embedding inequality \eqref{eq-4-3}, we have
\begin{equation}
    \label{eq-8b-6}
    w(P_0) \geq c_0 d^2 \|w\|_{C^{0,1}(\overline{\Omega})},
\end{equation}
where $c_0 >0$ only depend on $\frac{t}{\rho_0}$, $\frac{l}{\rho_0}$, $M_0$, $M_1$, $A_1$,
$\alpha_0$, $\gamma_0$, $\overline{\kappa}$. Let
\begin{equation}
    \label{eq-8c-1}
    \overline{\sigma} = \min \left ( \frac{d}{4}, \frac{c_od^2}{4}\right ).
\end{equation}
Let us notice that, by this choice of $\overline{\sigma}$,
$dist(P_0, \partial \Omega) \geq 4 \overline{\sigma} \rho_0$ and,
recalling \eqref{eq-8b-6}, we have that, for every $x \in
B_{2\overline{\sigma}\rho_0}(P_0)$,
\begin{eqnarray}
    \label{eq-8c-2}
    w(x) \geq w(P_0) - |w(x)-w(P_0)| \geq w(P_0) -
    2\overline{\sigma}\|w\|_{C^{0,1}(\overline{\Omega})} \geq
    \frac{w(P_0)}{2}.
\end{eqnarray}

Now, \eqref{eq-8a-0} follows {}from \eqref{eq-8b-3} , \eqref{eq-w dal basso in P_0} and \eqref{eq-8c-2}.
The integral estimate \eqref{eq-8a-1} follows {}from \eqref{eq-8b-5} and \eqref{eq-8c-2}.

\end{proof}

As is customary, to deal with the inverse problem we need a preliminary analysis of the fine properties of the solutions of the underlying equations. For this purpose we state a regularity result for the solution to a slight variation of the equation at hand with coefficients belonging to fractional Sobolev spaces.

\begin{prop}[$H^s$-regularity]
\label{prop-5a-1}
Let $\Omega$ be a bounded domain in $\R^2$ with boundary of Lipschitz
class with constants $\rho_0$, $M_0$, satisfying \eqref{eq:1-1}. Given $g \in H^s(\Omega)$, for some $s\in (0,1)$, let $w \in H^3(\Omega)$ be a weak solution
to
\begin{eqnarray}
    \label{eq-5a-1}
	\divrg (\divrg ( (\mathbb P +\mathbb P ^h)\nabla^2 w)) 
				-\divrg (\divrg (\divrg (\mathbb Q \nabla^3 w)))=g, \quad \mbox{in } \Omega, 
				\end{eqnarray}

where $\mathbb P, \mathbb P^h, \Q$ satisfy the isotropy condition  \textit{ii)} , the strong convexity conditions \textit{iii)} and for some $s\in(0,1)$ 
\begin{equation}
    \label{eq-5a-2}
    \|\mathbb P\|_{W^{2,\infty}(\Omega)}, \ \|\mathbb P^h \|_{W^{2,\infty}(\Omega)} \leq A_2\rho_0^3,
		\quad \|\mathbb Q \|_{W^{3,\infty}(\Omega)} \leq A_2\rho_0^5,
\end{equation}
\begin{equation}
    \label{eq-5a-3}
    \|\mathbb P\|_{H^{2+s}(\Omega)}, \ \|\mathbb P^h \|_{H^{2+s}(\Omega)} \leq A_3\rho_0^3, \quad \|\mathbb Q  \|_{H^{3+s}(\Omega)} \leq A_3\rho_0^5 \ .
\end{equation}

%
Then, for every $\sigma > 0$, we have
\begin{equation}
    \label{eq-5a-4}
    \|w \|_{H^{6+s}(\Omega_{\sigma \rho_0})} \leq C\left (
    \|w\|_{H^3(\Omega_{\frac{\sigma}{2}\rho_0})} + \rho_0 \|g\|_{H^{s}(\Omega)}
    \right ),
\end{equation}
where the constant $C>0$ only depends on $\frac{t}{\rho_0}$, $\frac{l}{\rho_0}$, $\alpha_0$, $\gamma_0$, $M_0$, $M_1$, $A_2$,
$A_3$, $s$, $\sigma$.
\end{prop}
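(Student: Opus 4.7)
The strategy is an interior elliptic regularity bootstrap for the sixth-order operator, combined with a cut-off argument and fractional product estimates. First I would expand the two nested divergences and rewrite the PDE in non-divergence form
\begin{equation*}
Q_{ijklmn}(x)\,\partial^{6}_{ijklmn}w + \sum_{|\alpha|\leq 5} a_{\alpha}(x)\,\partial^{\alpha}w = g,
\end{equation*}
where each $a_\alpha$ is a polynomial expression in the derivatives of $\mathbb{P},\mathbb{P}^h,\mathbb{Q}$ of total order at most $6-|\alpha|$. By the isotropy \eqref{eq:7-0} and the strong convexity \eqref{eq:8-4} the principal symbol is proportional to $|\xi|^{6}$, hence $L$ is Petrovskii-elliptic of order six; and by \eqref{eq-5a-2} all the $a_\alpha$ are at least in $L^\infty$, with $a_\alpha\in W^{1,\infty}$ whenever $|\alpha|\geq 4$.

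\textbf{Integer bootstrap.} Fix $\sigma>0$ and a finite chain of nested cut-offs $\eta_0,\ldots,\eta_3\in C^\infty_c$ with supports decreasing in steps of $\sigma\rho_0/8$, so that $\mathrm{supp}\,\eta_0\subset\Omega_{\sigma\rho_0/2}$ and $\eta_3\equiv 1$ on $\Omega_{\sigma\rho_0}$. The compactly supported function $u_k:=\eta_k w$ satisfies
\begin{equation*}
L u_k=\eta_k g+[L,\eta_k]w,
\end{equation*}
where the commutator has order at most $5$ in $w$. I would then apply the standard a priori estimate
\begin{equation*}
\|u\|_{H^{6+k}(\R^2)}\leq C\bigl(\|Lu\|_{H^{k}(\R^2)}+\|u\|_{L^2(\R^2)}\bigr)
\end{equation*}
for compactly supported $u$, obtained by freezing $\mathbb{Q}$ at a point, inverting the resulting constant-coefficient sixth-order operator by Fourier multiplier analysis, and absorbing the variable-coefficient remainder over a covering of $\Omega$ by balls on which the oscillation of $\mathbb{Q}$ is small (legitimate since $\nabla^3\mathbb{Q}\in L^\infty$). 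Iterating over $k=0,1,2,3$ upgrades $w\in H^3_{\mathrm{loc}}$ to $w\in H^6$ on a set slightly larger than $\Omega_{\sigma\rho_0}$, with a quantitative bound controlled by $\|w\|_{H^3(\Omega_{\sigma\rho_0/2})}+\rho_0\|g\|_{L^2(\Omega)}$.

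\textbf{Fractional step and main obstacle.} To gain the last $s$ derivatives I would repeat the freeze-and-perturb scheme in the fractional Sobolev scale. Each product $\partial^{\beta}\mathbb{Q}\cdot\partial^{\alpha}w$ (with $|\alpha|+|\beta|=6$, $|\beta|\leq 3$) and the analogous $\mathbb{P}$-terms have to be placed in $H^{s}$; this is achieved through a Moser-type fractional product estimate of the form
\begin{equation*}
\|fv\|_{H^{s}}\leq C\bigl(\|f\|_{L^{\infty}}\|v\|_{H^{s}}+\|f\|_{H^{\tau}}\|v\|_{L^{\infty}}\bigr),\qquad \tau>1,
\end{equation*}
applied with $f$ a derivative of a coefficient and $v$ a derivative of $w$. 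The hypotheses \eqref{eq-5a-2}--\eqref{eq-5a-3} are tuned exactly for this: the $H^{2+s}$ bound on $\mathbb{P},\mathbb{P}^{h}$ absorbs up to two derivatives landing on $\mathbb{P}$, whereas the $H^{3+s}$ bound on $\mathbb{Q}$ absorbs up to three derivatives landing on $\mathbb{Q}$, ensuring that the full right-hand side of the equation for $u_3$ lies in $H^{s}(\R^{2})$ once $w\in H^{6}$. The main obstacle is the combinatorial bookkeeping matching coefficient regularity against the order of the derivative hitting $w$, together with control of the fractional commutator between the cut-off $\eta_3$ and the sixth-order operator; both are handled by the same fractional product estimate once the integer bootstrap has placed $w$ in $H^{6}$ of a slightly larger domain.
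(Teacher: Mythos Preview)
Your proposal is correct and is exactly the approach the paper has in mind: the paper's own proof consists solely of a pointer to the classical interior-regularity machinery of Agmon and Folland, and your freeze-coefficients/cut-off/commutator bootstrap together with the fractional product estimate is precisely that machinery, specialized to this sixth-order isotropic operator. The only point to watch is the very first step of the integer bootstrap, where $[L,\eta_0]w$ lies only in $H^{-2}$ when $w\in H^{3}$, so you must either invoke the negative-index version of the a priori estimate (valid for the constant-coefficient principal part by duality) or use Agmon's difference-quotient argument in the weak formulation to climb from $H^{3}$ to $H^{6}$ before your scheme with $k\geq 0$ takes over.
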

\begin{proof}
The method of proof is based on the techniques introduced in \cite{Ag,Fo}. 

\end{proof}

\section{The inverse problem}
\label{Inverse}

In order to derive our stability result for the inverse problem we
need further a priori information on the subgrade reaction
coefficient $\kappa$, namely we shall assume 


%
%

%
\begin{equation}
    \label{eq-7-3}
    \|\kappa\|_{L^\infty(\Omega)}+\rho_0^{s-1}[\kappa]_{H^s(\Omega)} \leq
    \frac{\overline{\kappa}}{\rho_0}, \quad \kappa \geq 0 \hbox{ a.e. in } \Omega.
\end{equation}
Hereinafter, we shall refer to $\frac{t}{\rho_0}$, $\frac{l}{\rho_0}$, $\alpha_0$, $\gamma_0$, $d$, $M_0$, $M_1$, $A_2$,
$A_3$, $\overline{\kappa}$, $\overline{f}$, $s$ as the \textit{a priori data}.

\begin{theo}
\label{theo-7-1}

Let $\Omega$ be a bounded domain in $\R^2$ with boundary of
Lipschitz class with constants $\rho_0$, $M_0$, satisfying
\eqref{eq:1-1}. Let $\mathbb P$, $\mathbb P^h$, $\mathbb Q$ satisfying the isotropy conditions \textit{ii)}, the strong convexity conditions \textit{iii)} and the regularity conditions  
\eqref{eq-5a-2}, \eqref{eq-5a-3} for some $s\in (0,1)$. Let $P_0 \in \Omega$
satisfying \eqref{eq-6-1}.
Let $f \in \R$ be given by \eqref{eq-f=} and let $w_i \in H_0^3(\Omega)$, $i=1,2$, be the solution
to

\begin{center}
{\small
\( {\displaystyle 
\begin{array}{lr}
        \divrg (\divrg ( (\mathbb P +\mathbb P ^h)\nabla^2 w_i)) 
				-\divrg (\divrg (\divrg (\mathbb Q \nabla^3 w_i)))
				+ \kappa_iw_i= f\delta_{P_0},& \mathrm{in}\ \Omega,


    \vspace{0.25em}\\
    w_i =0, & \mathrm{on}\ \partial \Omega,
        \vspace{0.25em}\\
    w_i,_n =0, & \mathrm{on}\ \partial \Omega,
          \vspace{0.25em}\\
		w_i,_{nn} =0, & \mathrm{on}\ \partial \Omega,
          \vspace{0.25em}\\
\end{array}
}
\) \vskip -10.7em
\begin{eqnarray}
& & \label{eq-3I-1i}\\
& & \label{eq-3I-2i}\\
& & \label{eq-3I-3i}\\
& & \label{eq-3I-4i}
\end{eqnarray}
}
\end{center}
for $\kappa_i \in L^\infty(\Omega) \cap
H^s(\Omega)$ satisfying \eqref{eq-7-3}.

If, for some $\epsilon>0$,
\begin{equation}
    \label{eq-8-1}
    \|w_1 - w_2\|_{L^2(\Omega)}\leq \epsilon\rho_0 \overline{f},
\end{equation}
then for every $\sigma > 0$ we have
\begin{equation}
    \label{eq-8-2}
    \|\kappa_1 - \kappa_2\|_{L^2(\Omega_{\sigma \rho_0})}\leq \frac{C}{\rho_0} \epsilon^\beta,
\end{equation}
where the constants $C>0$ and $\beta \in (0,1)$ only depend on the
a priori data and on $\sigma$.
\end{theo}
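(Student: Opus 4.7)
Setting $w = w_1 - w_2$ and $\delta\kappa = \kappa_1 - \kappa_2$, the plan is to subtract the two equations \eqref{eq-3I-1i} so that the Dirac source cancels and one is left with
\begin{equation*}
Lw := \divrg (\divrg ( (\mathbb P+\mathbb P^h)\nabla^2 w)) -\divrg (\divrg (\divrg (\mathbb Q \nabla^3 w))) = -\kappa_1 w - (\delta\kappa)\, w_2 \quad \text{in } \Omega,
\end{equation*}
together with $w \in H^3_0(\Omega)$. Under the regularity hypotheses on $\mathbb P, \mathbb P^h, \mathbb Q$ and on $\kappa_1, \kappa_2$, and using the a priori bound \eqref{eq-4-1} for $w_1, w_2$, the right-hand side belongs to $H^s(\Omega)$ with a norm controlled by the a priori data, so that Proposition \ref{prop-5a-1} applies to $w$ on any interior subdomain.

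The argument then proceeds in three main steps. First, I would combine the interior $H^{6+s}$-bound on $w$ from Proposition \ref{prop-5a-1} (on a set slightly larger than $\Omega_{\sigma\rho_0}$) with the hypothesis $\|w\|_{L^2(\Omega)} \leq \epsilon \rho_0 \overline{f}$, via interpolation between Sobolev norms, thereby converting $L^2$-smallness of $w$ into the H\"older-type smallness
\begin{equation*}
\|w\|_{H^6(\Omega_{\sigma\rho_0})} \leq C \epsilon^{\beta_1}
\end{equation*}
for some $\beta_1 \in (0,1)$ depending only on $s$. Second, I would rearrange the difference equation as $(\delta\kappa)\, w_2 = -Lw - \kappa_1 w$ and take $L^2(\Omega_{\sigma\rho_0})$-norms to obtain
\begin{equation*}
\|(\delta\kappa)\, w_2\|_{L^2(\Omega_{\sigma\rho_0})} \leq \|Lw\|_{L^2(\Omega_{\sigma\rho_0})} + \overline{\kappa}\,\rho_0^{-1}\|w\|_{L^2(\Omega_{\sigma\rho_0})} \leq C \epsilon^{\beta_1}.
\end{equation*}

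The main obstacle is the third step: converting this weighted bound on $(\delta\kappa)\, w_2$ into an unweighted bound on $\delta\kappa$. Since $w_2$ can in principle vanish at interior points, pointwise division is unavailable, and one must exploit the quantitative non-degeneracy of $w_2$. The key ingredient is the $A_p$-property of Proposition \ref{prop-9-1}: the weight $w_2^2$ satisfies a Muckenhoupt-type condition whose constants are propagated from the neighbourhood of $P_0$, where Lemma \ref{lem-8a-1} provides a pointwise lower bound for $w_2$, via the three-spheres and doubling inequalities of \cite{M-R-S-V-2024}. Introducing $E_t = \{x \in \Omega_{\sigma\rho_0} : w_2^2(x) < t\}$ and using the uniform $L^\infty$-bound on $\delta\kappa$, I would split
\begin{equation*}
\|\delta\kappa\|_{L^2(\Omega_{\sigma\rho_0})}^2 \leq \|\delta\kappa\|_{L^\infty}^2\, |E_t| + \frac{1}{t}\, \|(\delta\kappa)\, w_2\|_{L^2(\Omega_{\sigma\rho_0})}^2,
\end{equation*}
bound $|E_t| \leq C\, t^{1/(p-1)}$ via the $A_p$-property, and finally optimise in $t$. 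The resulting estimate of the form $\|\delta\kappa\|_{L^2(\Omega_{\sigma\rho_0})} \leq (C/\rho_0)\, \epsilon^\beta$, with $\beta$ depending on $\beta_1$ and on the $A_p$ exponent $p$, is the desired conclusion \eqref{eq-8-2}.
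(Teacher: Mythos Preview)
Your proposal is correct and follows the same overall scheme as the paper: subtract the two equations, use interior $H^{6+s}$-regularity (Proposition~\ref{prop-5a-1}) together with interpolation to convert $L^2$-smallness of $w$ into $H^6$-smallness, obtain a weighted estimate $\|(\delta\kappa)\,w_j\|_{L^2(\Omega_{\sigma\rho_0})}\leq C\epsilon^{\beta_1}$, and then remove the weight via the $A_p$-property. The only genuine difference is in the last step: the paper writes $\int(\delta\kappa)^2 = \int |w_1|^{2/p}(\delta\kappa)^2|w_1|^{-2/p}$, applies H\"older with exponents $(p,\tfrac{p}{p-1})$, and bounds the negative-power integral by covering $\widetilde\Omega_{\sigma\rho_0}$ with balls on which Propositions~\ref{prop-8-1} and~\ref{prop-9-1} give a uniform bound on $\int_{B_r}|w_1|^{-2/(p-1)}$; your level-set splitting plus optimisation in $t$ is an equivalent repackaging that yields the same exponent $\beta = \beta_1/p$.

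One point deserves more care. Propositions~\ref{prop-8-1} and~\ref{prop-9-1} apply to solutions of the \emph{homogeneous} equation, with constants depending on the frequency ratio $N$ of \eqref{eq-frequency}; but $w_2$ solves the equation with the Dirac source and is not in $H^3$ of a full neighbourhood of $P_0$ in the sense required. The paper therefore first excises a small ball around $P_0$, handling $B_{2\overline\sigma\rho_0}(P_0)$ directly via the pointwise lower bound \eqref{eq-8a-0}, and then works in $U=\Omega\setminus B_{\overline\sigma\rho_0}(P_0)$, where $w_1$ (or $w_2$) does satisfy the homogeneous equation and where \eqref{eq-8a-1} provides the control on $N$ needed for the constants $B$, $p$, $c_\tau$ to depend only on the a~priori data. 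Your sentence about ``constants propagated from the neighbourhood of $P_0$'' gestures at this, but the domain splitting and the bound on $N$ should be made explicit; once they are, your Chebyshev estimate $|E_t|\leq t^{1/(p-1)}\int_{\widetilde\Omega_{\sigma\rho_0}}|w_2|^{-2/(p-1)}\leq Ct^{1/(p-1)}$ follows exactly from the same covering argument the paper uses.
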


Let us now premise to the proof of Theorem \ref{theo-7-1}. 
some auxiliary propositions concerning quantitative versions of the unique continuation principle (Propositions \ref{prop-8-1}
and  Proposition \ref{prop-9-1} below).

\begin{prop}[Lipschitz propagation of smallness]
\label{prop-8-1}
Let $U$ be a bounded domain in $\R^2$ of Lipschitz class with constants
$\rho_0$, $M_0$ and satisfying $|U| \leq M_1 \rho_0^2$. Let $w \in
H^3(U)$ be a solution to
\begin{equation}
    \label{eq-8bis-1}
    \divrg (\divrg ( (\mathbb P +\mathbb P ^h)\nabla^2 w)) 
				-\divrg (\divrg (\divrg (\mathbb Q \nabla^3 w)))
				+ \kappa w = 0, \quad \hbox{in } U,
\end{equation}
where $\mathbb P, \mathbb P^h, \mathbb Q$ satisfy the isotropy conditions \textit{ii)}, the strong convexity conditions \textit{iii)} and the regularity conditions \eqref{eq-5a-2} in $U$, and $\kappa$  
satisfies \eqref{eq-3-0} in $U$. Assume
\begin{equation}
   \label{eq-frequency}
\frac{\|w\|_{H^{ \frac{1}{2} } (U)}}{\|w\|_{L^2(U)}}\leq N.
\end{equation}
There exists a constant $c_1 >
1$, only depending on $\frac{t}{\rho_0}$,$\frac{l}{\rho_0}$, $\alpha_0$, $\gamma_0$, $A_2$,  and $\overline{\kappa}$, such
that, for every $\tau >0$ and for every $x \in U_{c_1 \tau
\rho_0}$, we have
\begin{equation}
    \label{eq-9-1}
    \int_{B_{\tau \rho_0}(x)} w^2 \geq c_\tau \int_U w^2,
\end{equation}
where $c_\tau >0$ only depends on  $\frac{t}{\rho_0}$,$\frac{l}{\rho_0}$, $\alpha_0$, $\gamma_0$, $M_0$, $M_1$, $A_2$, $\overline{\kappa}$, $\tau$ and on $N$.
\end{prop}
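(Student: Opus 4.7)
The plan is to derive the Lipschitz propagation of smallness from the three spheres inequality for the operator in \eqref{eq-8bis-1} established in \cite{M-R-S-V-2024}, via the classical chain-of-balls iteration along the lines of \cite{A-M-R-V-2015}. The starting analytic tool is the three spheres inequality: for concentric discs $B_{r_1}(y) \subset B_{r_2}(y) \subset B_{r_3}(y) \subset U$ with fixed radii ratios, any $H^3$-weak solution $w$ of \eqref{eq-8bis-1} satisfies
\[
\|w\|_{L^2(B_{r_2}(y))} \leq C\,\|w\|_{L^2(B_{r_1}(y))}^{\theta}\,\|w\|_{L^2(B_{r_3}(y))}^{1-\theta},
\]
with $\theta \in (0,1)$ and $C>0$ depending only on $\frac{t}{\rho_0},\frac{l}{\rho_0},\alpha_0,\gamma_0,A_2,\overline{\kappa}$. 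This is the only PDE input I would need.

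Fix $\tau>0$ and $x \in U_{c_1\tau\rho_0}$, with $c_1>1$ chosen large enough (depending only on the constants listed in the statement) that every disc of radius $4\tau\rho_0$ centered in $U_{c_1\tau\rho_0/2}$ still fits inside $U$. For each $y \in U_{c_1\tau\rho_0}$ I would connect $x$ to $y$ by a continuous arc $\gamma \subset U_{c_1\tau\rho_0/2}$ and cover $\gamma$ by a chain of overlapping discs $B_{\tau\rho_0}(x_0),\dots,B_{\tau\rho_0}(x_k)$ with $x_0=x$, $x_k=y$ and consecutive centers at distance at most $\tau\rho_0/2$. A purely geometric argument exploiting $|U|\leq M_1\rho_0^2$ and the Lipschitz regularity of $\partial U$ bounds $k$ by a constant $\overline{k}$ depending only on $M_0,M_1,\tau$. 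Iterating the three spheres inequality link by link, and then squaring and summing over a finite subcover of $U_{c_1\tau\rho_0}$ by discs of radius $\tau\rho_0/2$, yields
\[
\int_{U_{c_1\tau\rho_0}} w^2 \leq C_1 \Big(\int_{B_{\tau\rho_0}(x)} w^2\Big)^{\theta_0}\Big(\int_U w^2\Big)^{1-\theta_0},\qquad \theta_0 := \theta^{\overline{k}}.
\]

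The main technical hurdle is the residual boundary strip $U\setminus U_{c_1\tau\rho_0}$, which the chain cannot reach; this is precisely where the frequency hypothesis \eqref{eq-frequency} enters. In two dimensions the Sobolev embedding $H^{1/2}(U) \hookrightarrow L^4(U)$ combined with H\"older's inequality on the strip, whose area is at most $C(M_0)\,c_1\tau\rho_0^2$, gives
\[
\int_{U\setminus U_{c_1\tau\rho_0}} w^2 \;\leq\; C(M_0)\,\tau^{1/2}\,\|w\|_{H^{1/2}(U)}^2 \;\leq\; C(M_0)\,N^2\,\tau^{1/2}\int_U w^2.
\]
Combining the two estimates produces a scalar inequality in $A := \int_U w^2$ and $B := \int_{B_{\tau\rho_0}(x)} w^2$ of the form $A \leq C_1 B^{\theta_0} A^{1-\theta_0} + C_2 N^2 \tau^{1/2} A$; rearranging, after possibly iterating the chain argument on a decreasing sequence of strip widths to absorb the boundary coefficient into a geometric series controlled by $N$, produces the desired bound \eqref{eq-9-1} with $c_\tau>0$ depending on the a priori data, on $\tau$ and on $N$. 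I expect the quantitative bookkeeping in the boundary absorption step, especially the dependence of $c_\tau$ on $N$ and $\tau$, to be the most delicate point.
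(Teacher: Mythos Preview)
Your proposal is correct and follows precisely the route the paper indicates: the paper gives no self-contained argument for this proposition but simply refers to the three spheres inequality of \cite{M-R-S-V-2024} together with the chain-of-balls/frequency argument of \cite[Proposition~6.1]{MRSV2023}, which is exactly what you sketch (three spheres, iteration along a chain of discs to cover $U_{c_1\tau\rho_0}$, and control of the residual strip via $H^{1/2}(U)\hookrightarrow L^4(U)$ combined with \eqref{eq-frequency}). The only imprecision is in the absorption step: rather than ``iterating on a decreasing sequence of strip widths'', the standard bookkeeping decouples the chain radius $r$ from $\tau$, choosing $r\le\tau$ small in terms of $N$ so that $C(M_0)N^2(c_1 r)^{1/2}<\tfrac12$, and then starts the chain from $B_{r\rho_0}(x)\subset B_{\tau\rho_0}(x)$; this fixes the dependence of $c_\tau$ on $N$ and $\tau$ cleanly.
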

The proof of the above proposition is based on the three spheres
inequality obtained in  \cite{M-R-S-V-2024}  and the argument in  \cite[Proposition 6.1]{MRSV2023}. 
\begin{prop}[$A_p$ property]
\label{prop-9-1}
In the same hypotheses of Proposition \ref{prop-8-1}, there exists
a constant $c_2 > 1$, only depending on $\frac{t}{\rho_0}$, $\frac{l}{\rho_0}$, $\alpha_0$, $\gamma_0$,
$M_0$,
$M_1$, $A_2$, $\overline{\kappa}$, such that, for every $\tau
> 0$ and for every $x \in U_{c_2\tau \rho_0}$, we have
\begin{equation}
    \label{eq-9-2}
    \left (
    \frac{1}{|B_{\tau \rho_0}(x)|}
    \int_{B_{\tau \rho_0}(x)} |w|^2
    \right )
    \left (
    \frac{1}{|B_{\tau \rho_0}(x)|}
    \int_{B_{\tau \rho_0}(x)} |w|^{ - \frac{2}{p-1}  }
    \right )^{  p-1  } \leq B,
\end{equation}
where $B>0$ and $p > 1$ only depend on $\frac{t}{\rho_0}$,$\frac{l}{\rho_0}$, $\alpha_0$, $\gamma_0$, 
$M_0$, $M_1$, $A_2$,
$\overline{\kappa}$, $\tau$ and on the quantity $N$ defined in \eqref{eq-frequency}.
\end{prop}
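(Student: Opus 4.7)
The plan is to deduce the Muckenhoupt $A_p$ property of $w^2$ from the doubling inequality for solutions of the nanoplate equation \eqref{eq-8bis-1}, established in \cite{M-R-S-V-2024}. The strategy follows the classical Garofalo--Lin approach, already adapted to fourth-order plate operators in \cite{A-M-R-V-2015}.

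First, I would apply the doubling inequality of \cite{M-R-S-V-2024}: under the hypotheses of Proposition \ref{prop-8-1}, there exists a constant $K>1$, depending on the a priori data and on $N$, such that for every $x \in U_{c_2\tau\rho_0}$ (with $c_2$ chosen suitably large) and every admissible radius $r\leq \tau\rho_0$ one has
\begin{equation*}
\int_{B_{2r}(x)} w^2\, dy \,\leq\, K \int_{B_r(x)} w^2\, dy.
\end{equation*}
This expresses the doubling property of the measure $w^2\, dx$ on balls contained in a suitable interior region of $U$.

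Second, I would combine this with an interior pointwise bound for $w$. By standard interior regularity for the sixth-order elliptic operator in \eqref{eq-8bis-1} (via Caccioppoli-type iteration based on the strong convexity conditions \textit{iii)}, much in the spirit of Proposition \ref{prop-5a-1} applied with $g=-\kappa w$) together with the Sobolev embedding $H^3 \hookrightarrow C^{0,1}$ valid in dimension two, one obtains
\begin{equation*}
\sup_{B_r(x)} w^2 \,\leq\, C \,\frac{1}{|B_{2r}(x)|} \int_{B_{2r}(x)} w^2 .
\end{equation*}
Coupled with the doubling inequality, this yields a reverse Jensen inequality for $w^2$ on balls $B_r(x)$ in the relevant interior region, and hence $w^2$ is an $A_\infty$ weight with quantitative constants depending on the a priori data, $N$ and $\tau$.

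Finally, I would invoke the Coifman--Fefferman characterization of $A_\infty$ to conclude that $w^2 \in A_p$ for some $p>1$, with $p$ and the bound $B$ depending only on the $A_\infty$ constants of $w^2$, hence on the a priori data, on $N$ and on $\tau$. The main obstacle lies in the quantitative control of the negative moment $\int_{B_{\tau\rho_0}(x)}|w|^{-2/(p-1)}$ near the possible zero set of $w$: this is achieved through a Calder\'on--Zygmund-type decomposition of the sublevel sets $\{|w|<\lambda\}$, driven by the doubling inequality, from which the admissible range of $p$ and the constant $B$ are finally read off.
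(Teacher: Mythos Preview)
Your proposal is correct and follows essentially the same route as the paper: the paper's own proof consists of the single sentence ``follows from the doubling inequality obtained in \cite{M-R-S-V-2024} \dots\ by applying the arguments in \cite{G-L}'', and what you have written is precisely a sketch of the Garofalo--Lin mechanism (doubling plus local $L^\infty$ control $\Rightarrow$ $A_\infty$ $\Rightarrow$ $A_p$) specialized to the sixth-order nanoplate operator.
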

The proof of the above proposition follows from the doubling inequality
obtained in  \cite{M-R-S-V-2024} (see also \cite{MRSV2023}), by applying the arguments in \cite{G-L}.

\begin{proof}[Proof of Theorem \ref{theo-7-1}]
If $\epsilon\geq 1$, then the proof of \eqref{eq-8-2} is trivial in view of
\eqref{eq-3-0}. Therefore it is not restrictive to assume $0<\epsilon<1$.

The difference
\begin{equation}
    \label{eq-10-1}
    w=w_1 - w_2
\end{equation}
of the solutions to \eqref{eq-3I-1i}--\eqref{eq-3I-4i} for $i=1,2$
is a weak solution to  the boundary value problem

\begin{center}
	{\small
		\( {\displaystyle 
			\begin{array}{lr}
				\divrg (\divrg ( (\mathbb P +\mathbb P ^h)\nabla^2 w)) 
				-\divrg (\divrg (\divrg (\mathbb Q \nabla^3 w)))
				+ \kappa_2w= (\kappa_2-\kappa_1)w_1,& \mathrm{in}\ \Omega,
				\vspace{0.25em}\\
				w =0, & \mathrm{on}\ \partial \Omega,
				\vspace{0.25em}\\
				w,_n =0, & \mathrm{on}\ \partial \Omega,
				\vspace{0.25em}\\
				w,_{nn} =0, & \mathrm{on}\ \partial \Omega.
				\vspace{0.25em}\\
			\end{array}
		}
		\) \vskip -10.7em
		\begin{eqnarray}
		& & \label{eq-3I-1iBIS}\\
& & \label{eq-3I-2iBIS}\\
& & \label{eq-3I-3iBIS}\\
& & \label{eq-3I-4iBIS} 
		\end{eqnarray}
	} 
\end{center}



Obviously, it is not restrictive to assume that
$\sigma \leq \overline{\sigma}$, with
$\overline{\sigma}$ defined in \eqref{eq-8c-1}.

In view of the regularity result in Proposition \ref{prop-5a-1} we notice that the solution to \eqref{eq-3I-1iBIS}-\eqref{eq-3I-4iBIS} belongs to $H^{6+s}(\Omega)$.

 By \eqref{eq-3I-1iBIS}, we have
\begin{equation}
    \label{eq-10-5}
    \int_{\Omega_{\sigma \rho_0}} (\kappa_2 -\kappa_1)^2 w_1^2 \leq 2(I_1
    +I_2),
\end{equation}
where
\begin{equation}
    \label{eq-10-6}
    I_1 = \int_{\Omega_{\sigma \rho_0}} \kappa_2^2 w^2 ,
\end{equation}
\begin{equation}
    \label{eq-10-7}
    I_2 = \int_{\Omega_{\sigma \rho_0}} \left[ \left| \divrg (\divrg ( (\mathbb P+\mathbb P^h) \nabla^2 w))\right| + \left|\divrg (\divrg(\divrg(\mathbb Q\nabla^3 w)))\right| \right
    ]^2.
\end{equation}
By \eqref{eq-3-0} and \eqref{eq-8-1}, we have
\begin{equation}
    \label{eq-10-8}
    I_1 \leq \overline{\kappa}^2\overline{f}^2\rho_0^2 \epsilon^2.
\end{equation}
By
\eqref{eq-5a-2}, we have
\begin{equation}
    \label{eq-11-1}
    I_2 \leq C A_2^2\|w\|_{H^6(\Omega_{\sigma \rho_0})}^2,
\end{equation}
with $C>0$ an absolute constant.
Let $g= (\kappa_2-\kappa_1)w_1 -\kappa_2w$. Note that, by \eqref{eq-4-1}, \eqref{eq-4-3}
and \eqref{eq-7-3},

\begin{equation}
    \label{eq-11-2}
    \|g\|_{H^s(\Omega)} \leq C \overline{\kappa}\,\overline{f},
\end{equation}
where $C>0$ only depends on $\frac{t}{\rho_0}$, $\frac{l}{\rho_0}$, $M_0$, $\alpha_0$, $\gamma_0$ and $s$.
By applying Proposition 
\ref{prop-5a-1} and estimate \eqref{eq-4-1} we
have
\begin{equation}
    \label{eq-11-3}
    \|w\|_{H^{6+s}(\Omega_{\sigma \rho_0})} \leq C \overline{f}
    \rho_0,
\end{equation}
with $C>0$ only depending on $\frac{t}{\rho_0}$, $\frac{l}{\rho_0}$, $M_0$, $M_1$, $A_2$, $A_3$,
$\alpha_0$, $\gamma_0$, $\overline{\kappa}$, $s$, $\sigma$. {}From the well-known interpolation
inequality
\begin{equation}
    \label{eq-11-4}
    \|w\|_{H^{6}(\Omega_{\sigma \rho_0})} \leq C \|w\|_{H^{6+s}(\Omega_{\sigma
    \rho_0})}^{  \frac{6}{6+s}  }
    \|w\|_{L^{2}(\Omega_{\sigma
    \rho_0})}^{  \frac{s}{6+s}  },
\end{equation}
and by \eqref{eq-8-1} and \eqref{eq-11-3}, we obtain
\begin{equation}
    \label{eq-11-5}
    \|w\|_{H^{6}(\Omega_{\sigma \rho_0})} \leq C \overline{f}\rho_0
    \epsilon^{  \frac{s}{6+s}  },
\end{equation}
where $C>0$ only depends on $\frac{t}{\rho_0}$, $\frac{l}{\rho_0}$, $M_0$, $M_1$, $A_2$, $A_3$,
$\alpha_0$, $\gamma_0$, $\overline{\kappa}$, $s$, $\sigma$.
{}From
\eqref{eq-10-5}, \eqref{eq-10-8}, \eqref{eq-11-1},
\eqref{eq-11-5}, it follows that
\begin{equation}
    \label{eq-12-1}
    \int_{\Omega_{\sigma \rho_0}} (\kappa_2 -\kappa_1)^2 w_1^2 \leq
    C \overline{f}^2\rho_0^2\epsilon^{ \frac{2s}{6+s}  },
\end{equation}
where $C>0$ only depends on $\frac{t}{\rho_0}$, $\frac{l}{\rho_0}$, $M_0$, $M_1$, $A_2$, $A_3$,
$\alpha_0$, $\gamma_0$, $\overline{\kappa}$, $s$, $\sigma$.

Let us first estimate $|\kappa_2-\kappa_1|$ in a disc centered at $P_0$.
Notice that, by the choice of $\overline{\sigma}$, $\Omega_{\sigma\rho_0}\supset B_{2\overline{\sigma}\rho_0}(P_0)$, for every $\sigma\leq\overline{\sigma}$.
By applying \eqref{eq-8a-0} for $w=w_1$, and by \eqref{eq-12-1} with $\sigma=\overline{\sigma}$, we obtain
\begin{equation}
    \label{eq-12-2}
    \int_{B_{2\overline{\sigma} \rho_0}(P_0)} (\kappa_2 -\kappa_1)^2 \leq
    \frac{C}{d^8}\epsilon^{ \frac{2s}{6+s}  },
\end{equation}
where $C>0$ only depends on $\frac{t}{\rho_0}$, $\frac{l}{\rho_0}$, $M_0$, $M_1$, $A_2$, $A_3$,
$\alpha_0$, $\gamma_0$, $\overline{\kappa}$, $s$, $\sigma$.

Now, let us control $|\kappa_2-\kappa_1|$ in
\begin{equation}
    \label{eq-13-1}
    \widetilde{\Omega}_{\sigma \rho_0} = \Omega_{\sigma \rho_0}
    \setminus B_{2\overline{\sigma} \rho_0}.
\end{equation}

By applying H\"{o}lder inequality and \eqref{eq-12-1}, we can
write, for every $p > 1$,
\begin{multline}
    \label{eq-13-2}
    \int_{\widetilde{\Omega}_{\sigma \rho_0}}(\kappa_2-\kappa_1)^2 =
    \int_{\widetilde{\Omega}_{\sigma \rho_0}}|w_1|^{ \frac{2}{p}
    }(\kappa_2-\kappa_1)^2|w_1|^{ -\frac{2}{p}  } \leq \\
    \leq
    \left (
    \int_{\widetilde{\Omega}_{\sigma \rho_0}}
    (\kappa_2-\kappa_1)^2 w_1^2
    \right )^{  \frac{1}{p} }
    \left (
    \int_{\widetilde{\Omega}_{\sigma \rho_0}}
    (\kappa_2-\kappa_1)^2 |w_1|^{ - \frac{2}{p-1}  }
    \right )^{  \frac{p-1}{p} } \leq \\
    \leq
    C\overline{f}^{ \frac{2}{p}   }\rho_0^{ \frac{2}{p}   }
    \epsilon^{ \frac{2s}{p(6+s)}   }
    \left (
    \int_{\widetilde{\Omega}_{\sigma \rho_0}}
    (\kappa_2-\kappa_1)^2 |w_1|^{ - \frac{2}{p-1}  }
    \right )^{  \frac{p-1}{p} },
\end{multline}
where $C>0$ only depends on $\frac{t}{\rho_0}$, $\frac{l}{\rho_0}$, $M_0$, $M_1$, $A_2$, $A_3$,
$\alpha_0$, $\gamma_0$, $\overline{\kappa}$, $s$, $\sigma$.

Let us cover $\widetilde{\Omega}_{\sigma \rho_0}$ with internally
non overlapping closed squares $Q_l(x_j)$ with center $x_j$ and side $l
= \frac{\sqrt{2}}{2\max\{2,c_1,c_2\}}\sigma \rho_0$, $j=1,...,J$,
where $c_1$ and $c_2$ have been introduced in Proposition
\ref{prop-8-1} and in Proposition \ref{prop-9-1}, respectively. By
the choice of $l$, denoting $r= \frac{\sqrt{2}}{2}l$,
\begin{equation}
    \label{eq-14-1}
    \widetilde{\Omega}_{\sigma \rho_0} \subset \bigcup_{j=1}^J Q_l(x_j)
    \subset \bigcup_{j=1}^J B_r(x_j) \subset
    \Omega_{\frac{\sigma}{2}
    \rho_0}\setminus B_{\overline{\sigma}\rho_0}(P_0),
\end{equation}
so that
\begin{equation}
    \label{eq-14-2}
    \int_{\widetilde{\Omega}_{\sigma \rho_0}}
    (\kappa_2-\kappa_1)^2 |w_1|^{ - \frac{2}{p-1}  }
    \leq
    \frac{4\overline{\kappa}^2}{\rho_0^2} \int_{\widetilde{\Omega}_{\sigma \rho_0}}
    |w_1|^{ - \frac{2}{p-1}  } \leq \frac{4\overline{\kappa}^2}{\rho_0^2} \sum_{j=1}^J\int_{B_r(x_j)}
    |w_1|^{ - \frac{2}{p-1}  }.
\end{equation}
By applying the $A_p$-property \eqref{eq-9-2} and the Lipschitz
propagation of smallness property \eqref{eq-9-1} to $w=w_1$ in $U
= \Omega \setminus B_{\overline{\sigma}\rho_0}(P_0)$,
with $\tau = \frac{r}{\rho_0}= \frac{\sigma}{2\max\{ 2,c_1,c_2
\}}$, and noticing that, for every $j$, $j=1,...,J$, $dist(x_j,
\partial U) \geq c_i r$, $i=1,2$, we have
\begin{equation}
    \label{eq-14-3}
    \int_{B_r(x_j)}  |w_1|^{ - \frac{2}{p-1}  }
    \leq
    \frac
    { B^{ \frac{1}{p-1}} |B_r(x_j)|    }
    {
    \left (
    \frac{1}{ |B_r(x_j)|  }
    \int_{B_r(x_j)}  |w_1|^2
    \right )^{ \frac{1}{p-1}   }   }
    \leq
    \frac
    { B^{ \frac{1}{p-1} }|B_r(x_j)|    }
    {
    \left (
    \frac{c_\tau}{ |B_r(x_j)|  }
    \int_{\Omega \setminus B_{\overline{\sigma}\rho_0}(P_0)}  |w_1|^2
    \right )^{ \frac{1}{p-1}   }   },
\end{equation}
where $B>0$, $p>1$ and $c_\tau>0$ only depend on $\frac{t}{\rho_0}$, $\frac{l}{\rho_0}$, $M_0$, $M_1$,
$A_2$, $\alpha_0$, $\gamma_0$, $\overline{\kappa}$, $\sigma$ and on the frequency
ratio  $\mathcal{F}= \frac{ \|w_1\|_{ H^{ \frac{1}{2}   }(  \Omega
\setminus B_{\overline{\sigma}\rho_0}(P_0) )   } }
 {  \|w_1\|_{ L^{2}(  \Omega
\setminus B_{\overline{\sigma}\rho_0}(P_0) )   }  }$.
Notice that, since $\Omega \setminus
B_{\overline{\sigma}\rho_0}(P_0)  \supset
B_{2\overline{\sigma}\rho_0}(P_0)  \setminus
B_{\overline{\sigma}\rho_0}(P_0) $, by applying \eqref{eq-8a-1},
we have
\begin{equation}
    \label{eq-15-1}
    \mathcal{F} \leq \frac{C}{\overline{\sigma} d^2},
\end{equation}
where $C>0$ only depends on $\frac{t}{\rho_0}$, $\frac{l}{\rho_0}$, $M_0$, $M_1$, $\alpha_0$, $\gamma_0$, $A_2$, $\overline{\kappa}$.
By applying \eqref{eq-8a-1} and \eqref{eq-8b-3} to estimate {}from
below the denominator in the right hand side of \eqref{eq-14-3}, by
\eqref{eq:1-1} and \eqref{eq-14-2}, we obtain   
\begin{equation}
    \label{eq-15-2}
    \int_{\widetilde{\Omega}_{\sigma \rho_0}}
    (\kappa_2-\kappa_1)^2 |w_1|^{ - \frac{2}{p-1}  }
    \leq
    \frac{C|\Omega|}{\rho_0^2(d^8f^2\rho_0^{-2})^{\frac{1}{p-1}}} \leq \frac{C}{(d^8\overline{f}^2\rho_0^2)^{\frac{1}{p-1}}},
\end{equation}
where $C>0$ only depends on $\frac{t}{\rho_0}$, $\frac{l}{\rho_0}$, $M_0$, $M_1$, $A_2$, $\alpha_0$, $\gamma_0$, $\overline{\kappa}$ and $\sigma$. By \eqref{eq-13-2} and \eqref{eq-15-2} we have
\begin{equation}
    \label{eq-16-1}
    \int_{\widetilde{\Omega}_{\sigma \rho_0}}
    (\kappa_2-\kappa_1)^2
    \leq
    \frac{C}{d^{\frac{8}{p}}} \epsilon^{ \frac{2s}{p(6+s)}  },
\end{equation}
where $C>0$ only depends on $\frac{t}{\rho_0}$, $\frac{l}{\rho_0}$, $M_0$, $M_1$, $A_2$, $A_3$, $\alpha_0$, $\gamma_0$, $\overline{\kappa}$, $s$ and $\sigma$. Finally, by
\eqref{eq-12-2} and \eqref{eq-16-1}, and recalling that $p>1$ and $\epsilon<1$, estimate \eqref{eq-8-2} follows with $\beta = \frac{s}{p(6+s)}$.
\end{proof}

\section{Concluding remarks}
\label{Conclusions}

Modeling the interaction between a nanoplate and the supporting structure is decisive for the definition of a mechanical model capable of predicting the response of the nanoplate accurately. Assuming that the interaction can be described by means of a Winkler elastic model, in this paper we have shown that the nonlinear inverse problem of determining the subgrade coefficient $\kappa$ from the measurement of the transverse deflection induced by a concentrated force admits H\"older-type stability. Key ingredients of the proof are quantitative versions of the unique continuation principle for solutions to the homogeneuous sixth order partial differential equation governing the  statical bending of a nanoplate resting on a Winkler foundation.

We expect that our result can be useful in the development of reconstruction methods for $\kappa$, as it implies the convergence of procedures based on regularization techniques \cite{DeH-Q-S-2012}. We plan to investigate on this issue in a subsequent work.

Winkler's foundation model is based on the assumption that there is no interaction between the elastic springs. This restrictive hypothesis is overcome, for example, in the Winkler-Pasternak model by including the additional term $-\divrg(p \nabla w)$ in the field equation \eqref{eq-intro-3I-1}, where $p \geq 0$ is a second unknown parameter \cite{M-M-B-A-S-2022}. The stable determination of both $\kappa$ and $p$ {}from a pair of measurements of the transverse deflection induced by a single concentrated load assigned at different points is, as far as we know, an open question, and even a general uniqueness result does not appear to be entirely trivial.

\bigskip
\noindent
\textbf{Acknowledgement.} The work of AM was supported by PRIN 2022 n. 2022JMSP2J "Stability, multiaxial fatigue and fatigue life prediction in statics and dynamics of innovative structural and material coupled systems" funded by MUR, Italy, and by the European Union – Next Generation EU.  The work of ES and SV was supported by PRIN 2022 n. 2022B32J5C funded by MUR, Italy, and by the European Union – Next Generation EU. ES has also been supported by Gruppo Nazionale per l'Analisi \text{Matematica,} la Probabilità e le loro applicazioni (GNAMPA) by the grant "Problemi inversi per equazioni alle derivate parziali". 

\bigskip

\end{document}